\documentclass[12pt]{article}
\usepackage{amssymb,amsmath}

\usepackage[latin1]{inputenc}
\usepackage[T1]{fontenc}
\usepackage{play}

\usepackage{graphicx}
\DeclareGraphicsExtensions{.pdf,.png,.jpg}

\graphicspath{ {images/} }

\textwidth16cm
\hoffset-1cm
\voffset-2cm
\textheight23cm

\usepackage{amsmath}
\usepackage{amsthm}
\usepackage{amsfonts}
\usepackage{bbm}
\usepackage{amssymb}
\usepackage{graphicx}
\usepackage{color}
\usepackage{subcaption}

\usepackage[dvipsnames]{xcolor}

\usepackage{fancybox}
\setlength{\fboxsep}{1.5ex}
\newlength{\querylen}
\setlength{\querylen}{\textwidth}
\addtolength{\querylen}{-2\fboxsep}

\newcommand{\sgn}{\mathop{\mathrm{sgn}}}

\newcommand{\prob}{\mathbb{P}}

\newcommand{\me}{\mathbb{E}}
\newcommand{\E}{\mathbb{E}}

\newcommand{\tX}{\widetilde{X}}
\newcommand{\tL}{\widetilde{L}}

\newcommand{\tpsi}{\widetilde{\psi}}

\newtheorem{thm}{Theorem}
\newtheorem{lemma}[thm]{Lemma}

\theoremstyle{definition}

\theoremstyle{remark}

\makeatletter
\renewcommand*\env@matrix[1][\arraystretch]{%
  \edef\arraystretch{#1}%
  \hskip -\arraycolsep
  \let\@ifnextchar\new@ifnextchar
  \array{*\c@MaxMatrixCols c}}
\makeatother

\begin{document}
\title{Diffusion Approximations in the Online Increasing Subsequence Problem} 
\author{Alexander Gnedin and Amirlan Seksenbayev}

\maketitle
\noindent

\begin{abstract}
\noindent
The online increasing subsequence problem is a stochastic optimisation task with the objective to maximise the expected
length of subsequence chosen from a random series by means of a nonanticipating decision strategy.
We study the structure of optimal and near-optimal subsequences in a standardised planar Poisson framework.
Following a long-standing suggestion by Bruss and Delbaen \cite{BD2}, we prove
 a joint functional limit theorem for the transversal fluctuations about the diagonal of the running maximum and the length processes.
The limit is identified explicitly with a Gaussian time-inhomogeneous diffusion. In particular, the running maximum converges to a Brownian bridge, 
and the length process has   another explicit  non-Markovian limit.

\end{abstract}

\section{Introduction}
We adopt the following standardised framework for the {\it online} stochastic optimisation problem of Samuels and Steele \cite{SS}.
Suppose i.i.d. marks drawn from the uniform distribution on $[0,1]$ are observed, one by one, 
   at times of independent  homogeneous Poisson process of intensity $\nu$ on $[0,1]$. Each mark can be selected or rejected.
The sequence of selected marks must increase. The task is to maximise  the expected length of selected increasing subsequence  using   an online   strategy.  
The online constraint requires that
each decision  becomes immediately  irrevocable as the mark is observed and
must be based exclusively on the information accumulated  previously without  foresight of the future.

The optimal online strategy is defined recursively in terms of a variable acceptance window, which limits  the difference between the next  and previous selections.
The strategy and its value can be found, in principle, by solving a dynamic programming equation, see
  \cite{BG, BD1, GS} for properties of the solution and approximations.
Our main theme is different.
We study the time evolution of increasing subsequences  under  online strategies that are  
 within $O(1)$  gap from the optimum for large $\nu$.

Let  $L_\nu(t)$ and $X_\nu(t)$, respectively, denote the length  and the last element of the increasing subsequence selected by time $t\in[0,1]$ under the optimal  strategy. 
The interest  to date focused on the total length 
$L_\nu(1)$. Samuels and Steele derived the principal asymptotics  ${\mathbb E}L_\nu(1)\sim\sqrt{2\nu}$, which was later found to be  an upper bound
with the optimality  gap of   order $\log \nu$ \cite{BD1}.  See \cite{Arlotto1, Arlotto 2, BG, BD1,  G-random} for refinements and generalisations. 
 In a recent paper \cite{GS} we combined
 asymptotic analysis of the dynamic programming equation 
with a renewal approximation to the range of the process $Z_\nu(t):=\sqrt{\nu(1-t)(1-X_\nu(t))}$ 
to  derive expansions for the mean
\begin{equation}\label{Eopt}
{\mathbb E}L_\nu(1)\sim\sqrt{2\nu}-\frac{1}{12}\log  \nu+c_0^*, ~~~\nu\to \infty,
\end{equation}
and the variance
\begin{equation}\label{Vopt}
{\rm Var }\,L_\nu(1)\sim\frac{1}{3}\sqrt{2\nu}-\frac{1}{72}\log \nu+c_1^*, ~~~\nu\to \infty,
\end{equation}
where $c_0^*$ and $c_1^*$ are unknown constants.
A central limit theorem for $L_\nu(1)$ was proved in \cite{BD2} 
by analysis of a related martingale,
and further extended in \cite{GS} to  a larger class of asymptotically optimal strategies
by the mentioned  renewal theory approach.

Samuels and Steele introduced the online selection problem as 
 an  offspring of  the much-studied Ulam-Hammersley problem
on the longest increasing subsequence of the Poisson scatter of points in the square $[0,1]^2$.  They viewed the Ulam-Hammersley problem as an  {\it offline} optimisation task with the complete foresight of choosable options. 
In the offline framework,  
 the well known principal asymptotics of the expected maximum length, $2\,\sqrt{\nu}$,
 is similar, but 
the second term of its asymptotic  expansion  and the principal term of  the standard deviation are both of the order  $\nu^{1/6}$.
The limit law for the offline maximum  length is the Tracy-Widom distribution from the random matrix theory. 
An excellent reference here with an exposition of the history is the book by Romik \cite{R}.

In the offline problem, 
some work has been done on the size of 
  transversal fluctuations about the diagonal $x=t$ in $[0,1]^2$.  
Johansson \cite{J} proved a measure concentration result asserting that, with probability approaching 1,  every longest increasing subsequence (which is not unique) lies in a diagonal strip of width of the order $\nu^{-1/6+\epsilon}$.
Duvergne, Nica and Vir{\'a}g \cite{DNV} recently proved the existence and gave some description of the functional limit, which is not Gaussian.
But for
smaller exponent $-1/2<\alpha<- 1/6 $, Joseph and Peled  \cite{DJP} showed that if  the increasing sequence is restricted to lie within the strip of width  $\nu^{-\alpha}$,   the expected maximum length  remains to be   asymptotic to $2\sqrt{\nu}$, while the limit distribution of the length switches to normal.

To extend the parallels  and gain further insight into the optimal selection  it is of considerable interest to  examine fluctuations of the  processes $L_\nu$ and $X_\nu$ as a whole.
On this path, one is  lead to study the  following scaled and  centred versions of the running maximum and length processes:
\begin{equation}\label{scale}
\widetilde{X}_\nu(t):=\nu^{1/4}(X_\nu(t)-t),~~~~~~ \widetilde{L}_\nu(t)= \nu^{1/4}\left(   \frac{ L_\nu(t)}{\sqrt{2\nu}}-t\right), ~~~t\in[0,1].
\end{equation}
To compare, in  the offline problem  by similar centring    the critical  transversal and longitudinal scaling factors  appear to be $\nu^{1/6}$ and $\nu^{1/3}$, respectively.
Our central result (Theorem \ref{main res})  is a functional limit theorem which entails that the process
$(\widetilde{X}_\nu,\widetilde{L}_\nu)$
converges weakly to a simple two-dimensional Gaussian diffusion.
In particular, $\widetilde{X}_\nu$ approaches a Brownian bridge.
The limit of $\widetilde{L}_\nu$ is a non-Markovian process 
with the covariance function 
$$  (s,t)\mapsto    \frac{2s(2-t) -(2-s-t)\log(1-s)}{6\sqrt{2}}\,,~~~~0\leq s\leq t\leq 1,$$
which corresponds to a correlated  sum of a Brownian motion and a Brownian bridge,

The question about functional limits for $L_\nu$ and $X_\nu$  has been initiated by Bruss and Delbaen  
\cite{BD2}.
 They  employed  the Doob-Meyer decomposition to compensate the processes, and in an analytic tour de force
showed that  the scaled martingales jointly converge  to a correlated Brownian motion in two dimensions.
However, the compensation
  keeps out of sight  a drift component absorbing much of the fluctuations  immanent to the selection process, let alone that 
the compensators themselves are nonlinear integral transforms of $X_\nu$.
To break the vicious circle one needs to obtain the limit of $X_\nu$  under complete control over the centring. 
Curiously, in the forerunning  paper  Bruss and Delbaen mentioned that  P.A. Meyer had suggested to them 
to  scrutinise  the generator of  the Markov process $(X_\nu,L_\nu)$    (see \cite{BD1}, Remark 2.4).

Looking at the generator of (\ref{scale}) we shall recognise  the limit process without difficulty.
But in order to justify the weak convergence in the Skorokhod space on the closed interval $[0,1]$
we will need to circumvent a difficulty  caused by pole singularities of the control function and the drift  coefficient at the right endpoint.
We shall also discuss related processes and derive tight uniform bounds on the expected values of $X_\nu$ and $L_\nu$, thus embedding 
the moment expansions from \cite{GS} in the functional context.

\vskip0.2cm
{\it Notation.} We sometimes omit  dependence on the intensity parameter $\nu$ wherever there is no ambiguity. Notation $X$ and $L$ will be context-dependent, typically standing for processes associated with a near-optimal online selection strategy,
while  $\tX$ and $\tL$ will denote the normalised versions with scaling and centring  as in (\ref{scale}).

\section{Selection strategies}
It will be convenient 
to extend the underlying framework slightly by considering
 a homogeneous  Poisson random measure  $\Pi$  with  intensity $\nu$ in the halfplane ${\mathbb R}_+\times {\mathbb R}$, along 
 with the  filtration induced by  restricting $\Pi$ to $[0,t]\times{\mathbb R}$ for  $t\geq 0$.
We interpret  the generic atom $(t,x)$ of $\Pi$ as  random mark $x$ observed at time $t$. 
A sequence $(t_1,x_1), \dots, (t_\ell,x_\ell)$  of atoms   is said to be increasing if it is a chain in two dimensions, i.e.
$$t_1<\dots<t_\ell,~x_1<\dots<x_\ell.$$

For a given bounded measurable {\it control} function $\psi:[0,1]\times{\mathbb R}\to {\mathbb R}_+$, an online  strategy selecting such increasing sequence is defined by the following intuitive rule.
Let $x$ be the last mark selected before time $t$, or some given $x_0$ if no selection has been made.
Given the next mark $x'$ is observed at time $t$, this mark  is selected if and only if 
\begin{equation*}
   x<x'\leq x+\psi(t,x).
\end{equation*}
One can think of more general
online strategies, with the acceptance window shaped differently from  an interval  or possibly depending on the history in a more complex way.
Yet the considered class is sufficient for the sake of optimisation and can be further reduced to controls of a special type.

For a given control $\psi$, define  $X(t)$ and $L(t)$ to be, respectively,  the last mark selected and the number of marks selected within the time interval $[0,t]$.
The process  $X=(X(t),~t\in[0,1])$, which we call the {\it  running maximum}, is a time-inhomogeneous Markov process,
 jumping from the generic  state $x$ at rate $\psi(t,x)$ to another state uniformly distributed on $[x, x+\psi(t,x)]$.
The {\it length process}  $L=(L(t),~t\in[0,1])$ just counts the jumps of $X$; hence the bivariate process
$(X,L)$ is also Markovian.
Moreover,    the conditional distribution of $((X(t),L(t)), ~t\geq s)$ depends on the  pre-$s$ history only through $X(s)$.

Intuitively, the bigger $\psi$, the faster $X$ and $L$ increase. To enable 
comparisons of 
  selection processes with different controls it is very convenient to couple them by means of an 
additive representation through another Poisson random measure ${\Pi}^*$, thought of as a reserve of  positive increments.
The underlying properties of the planar Poisson process are  translation invariance and spatial independence:
$\Pi$ restricted to the shifted quadrant $(t,x)+{\mathbb R}_+^2$ is independent of $\Pi_{|[0,t]\times{\mathbb R}}$ and   has the same distribution as the translation of $\Pi_{|{\mathbb R}_+^2}$ by vector $(t,x)$.
So, letting ${\Pi}^*$ to be a distributional copy of $\Pi$, a solution to the system of stochastic differential equations
\begin{eqnarray}\label{knaps}
{\rm d}X(t)=\int_0^{\psi(t,X(t))} x \,{\Pi}^*({\rm d}t{\rm d}x), ~~~
{\rm d}L(t)=\int_0^{\psi(t,X(t))} {\Pi}^*({\rm d}t{\rm d}x)
\end{eqnarray}
with initial values $X(0)=x_0$ and $L(0)=0$
 will have the same distribution as $(X,L)$.

By virtue  of the  additive realisation through $\Pi^*$, the online increasing subsequence problem is transformed into an online knapsack packing problem \cite{Binpacking}.  
Here, the generic item of some size $x$
observed at time $t$ (an  atom  of $\Pi^*$)
can be either packed or dismissed.   The objective  translates as  maximisation of the expected number of items added within the unit time horizon to a knapsack of  unit capacity.
Note that for the increasing subsequence problem the (continuous) distribution of marks does not matter, while the knapsack problem is not distribution-free.

\begin{lemma}\label{compare}  For $i=1,2$ 
let $X_i$ be selection processes driven by  controls $\psi_i$. 
By coupling via {\rm (\ref{knaps})}, each time a process with smaller acceptance window jumps, the other process also has a jump of the same size.
\end{lemma}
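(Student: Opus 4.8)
The plan is to argue pathwise, atom by atom, exploiting that the two processes are driven by the \emph{same} reserve measure $\Pi^*$ in the additive representation (\ref{knaps}). The entire content of the statement is the monotonicity of the acceptance event in the width of the window, so the proof should reduce to an elementary interval inclusion once the dynamics are seen to be a well-defined pure-jump coupling.

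First I would record the well-posedness that makes an instant-by-instant comparison legitimate. Since $\psi_1$ and $\psi_2$ are bounded, both acceptance windows have widths uniformly bounded by some constant $M$, and only atoms $(t,x)$ of $\Pi^*$ with height $x\le M$ can ever trigger a selection for either process. Almost surely there are only finitely many such atoms in the strip $[0,1]\times(0,M]$, and their time coordinates are pairwise distinct. Hence, for each realisation, both coupled trajectories are pure-jump paths with isolated jump epochs drawn from the same finite ordered list of atoms, and it suffices to compare the two processes across a single atom.

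Then I would fix an atom $(t,x)$ of $\Pi^*$ with $x>0$ and write $w_i:=\psi_i(t,X_i(t^-))$ for the width of the acceptance window of process $i$ immediately before $t$, evaluated at the current state $X_i(t^-)$. By (\ref{knaps}) process $i$ registers a jump at $t$ exactly when the atom falls inside its window, that is when $0<x\le w_i$, and in that case both increments are read off the same atom: $X_i$ jumps by $x$ and $L_i$ by $1$. Thus the acceptance indicator $\ind{x\le w_i}$ is nondecreasing in $w_i$. Consequently, if at time $t$ the process $i_0$ is the one with the smaller window, $w_{i_0}=\min(w_1,w_2)$, and it jumps, then $x\le w_{i_0}\le w_{3-i_0}$, so the other process also satisfies its acceptance condition and jumps — necessarily by the same increment $x$, since the jump size is the atom height shared by both. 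This is precisely the assertion.

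I do not expect a genuine obstacle here; the force of the statement comes entirely from the monotonicity of the acceptance event in the window width together with the common atoms of $\Pi^*$. The only matters requiring care are bookkeeping ones: verifying that jumps occur solely at atoms of $\Pi^*$ so that the comparison can be made at each epoch in isolation, and noting that the comparison is strictly local in time. In particular, the labels ``smaller'' and ``larger'' window may interchange along the trajectory as $X_1$ and $X_2$ drift apart, so no global ordering of the controls $\psi_1,\psi_2$ is assumed or needed; only the instantaneous comparison of $w_1$ and $w_2$ at each atom is used.
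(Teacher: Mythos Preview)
Your argument is correct and is essentially the same approach as the paper's: both rest on the interval inclusion $(0,\psi_1\wedge\psi_2]\subset(0,\psi_1\vee\psi_2]$, so an atom accepted by the process with the smaller window is automatically accepted by the other, with identical jump size. The paper compresses this into the single differential identity ${\rm d}(X_1-X_2)=\sgn(\psi_1-\psi_2)\int_{\psi_1\wedge\psi_2}^{\psi_1\vee\psi_2}x\,\Pi^*({\rm d}t\,{\rm d}x)$, whereas you spell out the atom-by-atom comparison and the well-posedness justifying it; the content is the same.
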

\begin{proof}
Straight from (\ref{knaps}),
\begin{eqnarray*}
{\rm d}(X_1-X_2) &=& \sgn(\psi_1-\psi_2) 
 \int_{\psi_1 \wedge\psi_2}^{\psi_1 \vee \psi_2}  x \,{\Pi}^*({\rm d}t{\rm d}x),\\
\end{eqnarray*}
where for shorthand $\psi_i=\psi_i(t,X_i(t))$.
\end{proof}

Conditionally on $(X(s), L(s))=(x,\ell)$, the process 
$(X(s+\cdot)-x, L(s+\cdot)-\ell))$ has the same distribution as $(X^{(s,x)}, L^{(s,x)})$, which similarly to (\ref{knaps}) is given by 
$$dX^{(s,x)}(u)=\int_0^{\psi(s+u,x+X^{(s,x)}(u))} y\Pi^*({\rm d}u{\rm d}y),~~~{\rm d}L^{(s,x)}(u)=\int_0^{\psi(s+u,x+X^{(s,x)}(u))} \Pi^*({\rm d}u{\rm d}y).$$
Averaging, we obtain  formulas for  the predictable {\it compensators} of $X$ and $L$
\begin{equation}\label{CXCL}
C_X(t):= \frac{\nu}{2} \int_0^t \psi^2(s, X(s)){\rm d}s,~~~C_L(t):= \nu \int_0^t \psi(s, X(s)){\rm d}s,
\end{equation}
so $X-C_X, L-C_L$ are zero-mean martingales.

With every control we may further relate a zero-mean martingale
\begin{equation}\label{Mmart}
M(t):=L(t)+\E\{L(1)-L(t)|X(t)\}-\E L(1)
\end{equation}
with terminal value $L(1)-\E L(1)$. If $\psi$ does not depend on $x$,  $L$ has independent increments and $M(t)= L(t)-\E L(t)$.

The selected increasing chain fits in the unit square if $X(1)\leq 1$, which translates in terms of the control function as the condition of 
{\it feasibility}:
\begin{equation*}
0<\psi(t,x)\leq 1-x~~{\rm for~~}(t,x)\in [0,1]^2.
\end{equation*} 
In the sequel, if not stated otherwise  we set $x_0=0$ and only consider feasible controls.

\subsection{Principal convergence of the moments}

This section follows closely the arguments found in \cite{BD2}, pp. 291-292.

Let 
$$p(t):=\E X(t)=\E C_X(t),~~~q(t):=\frac{\E L(t)}{\sqrt{2\nu}}=\frac{\E C_L(t)}{\sqrt{2\nu}}.$$
Some general relations between the moments follow straight from formulas for the compensators (\ref{CXCL}).  For shorthand, write $\psi=\psi(X(s),s)$. We have
$$0\leq \E\int_0^t\left( 1\pm\sqrt{\nu/2}\,\psi \right)^2{\rm d}s=t\pm 2q(t)+p(t),$$
where the right-hand side in increasing in $t$. It follows,
\begin{equation}\label{pq}
p(t)-t\geq 2\,(q(t)-t).
\end{equation}
Using the Cauchy-Schwarz inequality
\begin{eqnarray}\label{pq1}
\nonumber
(p(t)-t)^2= \left( \E\int_0^t \left(1-{\frac{\nu}{2}}\psi^2\right){\rm d}s\right)^2\leq \\
\nonumber
 \E\int_0^t \left(1+\sqrt{\nu/2}\,\psi\right)^2{\rm d}s\,\,
 \E\int_0^t \left(1-\sqrt{\nu/2}\,\psi\right)^2{\rm d}s=\\
(t+2q(t)+p(t))(t-2q(t)+p(t)).
\end{eqnarray}
Similarly
\begin{eqnarray}\label{pq2}
(q(t)-t)^2=  \left( \E\int_0^t 1\cdot\left(1-\sqrt{\nu/2} \,\psi \right){\rm d}s\right)^2\leq 
t\,(t-2q(t)+p(t))
\end{eqnarray}

The above relations did not use the feasibility constraint.
For feasible control we have $p(1)<1$, hence from (\ref{pq}) also $q(1)<1$. Since all factors in the right-hand sides  of 
 (\ref{pq1}), (\ref{pq2}) are increasing, replacing them by their maximal values at $t=1$ we obtain
\begin{equation}\label{pq3}
(p(t)-t)^2<8(1-q(1)),~~~(q(t)-t)^2<2(1-q(1)).
\end{equation}

We say that a strategy $\psi=\psi_\nu$ is {\it asymptotically optimal in the principal term} if $q(1)\to 1$, as $\nu\to\infty$, i.e. $\E L_\nu(1)\sim\sqrt{2\nu}$; in that case (\ref{pq3}) imply
the uniform convergence of the moments
$$\sup_{t\in[0,1]}|p(t)-t|\to0, \sup_{t\in[0,1]}|q(t)-t|\to0.$$

It follows from (\ref{Eopt}) that under the optimal strategy
\begin{equation}\label{q1asymp}
1-q(1)\sim \frac{\log \nu}{12\sqrt{2\nu}},~~~\nu\to\infty.
\end{equation}
This relation can be called  a {\it two-term asymptotic optimality}. 
Whenever this holds,
 the general bounds (\ref{pq3}) imply that both $\sup_{t\in[0,1]}|p(t)-t|$ and $\sup_{t\in[0,1]}|q(t)-t|$ can be estimated as $O(\sqrt{\log \nu}/\nu^{1/4})$.
A refinement of the convergence rate will be obtained in Section \ref{Moments}.

\subsection{The greedy strategy}
The {\it greedy} strategy, with control 
$\psi(t,x)=1-x$, outputs the sequence of consecutive records. The strategy is optimal for $\nu<1.34...$
Statistical properties of records from the Poisson process is a much-studied subject
 \cite{Records}. It is well known that, as $\nu$ increases, the distribution of $L(1)$ approaches normal
with mean and  variance  both asymptotic to $\log \nu$. Normalisation (\ref{scale}) is not 
 appropriate here as most of the records concentrate near the north-west corner of the unit square.

\subsection{The stationary strategy}\label{stationary}
We call the strategy with control $\psi(t,x)=\sqrt{{2}/{\nu}}$ {\it stationary}. 
Although not  feasible, the stationary strategy is an important benchmark.
Clearly, $L$ is a Poisson counting process with intensity
${\mathbb E}L(1)=\sqrt{2\nu}$.
Taking general constant  control $\psi(t,x)=\sqrt{c/\nu}$
with some  $c>0$  will yield a strategy outputting the mean length $\sqrt{\{c\wedge (2/c)\} \nu}$, which is maximal for $c={2}$.
In fact,
a much stronger optimality property holds:  the stationary strategy 
achieves the maximum
 expected length over the class of strategies that 
satisfy the {\it mean-value} constraint ${\mathbb E}X(1)\leq1$, see 
\cite{Arlotto1, BG, G-random,  GS}
for  proof and generalisations.
This gives the well-known upper bound mentioned in the Introduction, because each feasible strategy meets the mean-value constraint.

 It is seen from (\ref{knaps}) that $X$ is a compound Poisson process
$$X(t)=\sqrt{\frac{2}{\nu}} \,\,\,\sum_{i=0}^{L(t)} U_i\,,$$
where
$U_1,U_2,\dots$ are independent of $L$,  uniformly distributed  on $[0,1]$.
Straightforward calculation of moments using  Wald's identities yields
$${\mathbb E} X(t)=t,~~~{\rm Var} X(t)= \frac{2^{3/2}t}{3\sqrt{\nu}},~~~{\rm Cov}(X(t),L(t))= t .$$

Since $(X,L)$ has independent increments,
a functional limit 
in the Skorohod topology on $D[0,1]$
follows easily  from the multidimensional   invariance principle: 
\begin{equation*}
(\tX,\widetilde{L})\Rightarrow (W_1,  W_2),~~{\rm as~}\nu\to\infty,
\end{equation*}
where $\Rightarrow$ denotes   weak convergence,  and the limit process
 ${\boldsymbol W} :=(W_1,W_2)$ is a  two-dimensional Brownian motion with zero drift and covariance matrix
\begin{equation}\label{Sigma}
\me  \{ {\boldsymbol W}(t)^T \boldsymbol{W}(t)\}=t\,{\boldsymbol\Sigma}, ~~{\rm where}~~{\boldsymbol\Sigma}:=
\large
{
\begin{pmatrix}
\frac{2\sqrt{2}}{3} &\frac{1}{\sqrt{2}}\\ 
   \frac{1}{\sqrt{2}} & \frac{1}{\sqrt{2}}
\end{pmatrix}
}
\end{equation}
So, marginally,  $W_1$ and $W_2$ are centred Brownian motions with diffusion coefficients and correlation, respectively,

\begin{equation}\label{sigmas}
        \sigma_1:=\frac{2^{3/4}}{\sqrt{3}},~~~\sigma_2:=\frac{1}{2^{1/4}}, ~~~\rho:= \frac{\sqrt{3}}{2}. 
\end{equation}

Notably,
$$\rho=\frac{\sigma_2}{\sigma_1},$$
which implies that
the process $\boldsymbol W$ satisfies the identity
\begin{equation}\label{IdentW}
2W_2-W_1\stackrel{d}{=} W_1,
\end{equation}
which has a pre-limit analogue
\begin{equation}\label{2L-X}
2\tL-\tX  \stackrel{d}{=} \tX.
\end{equation}

Identity (\ref{2L-X}) can be explained by the symmetry
 of the uniform distribution, $U_i\stackrel{d}{=}1-U_i$, which allows us to write
$$X(t)\stackrel{d}{=} \sqrt{\frac{2}{\nu}} \,\,\,\sum_{i=0}^{L(t)} (1-U_i)=  \sqrt{\frac{2}{\nu}} \,  L(t)-X(t)\,.$$

Martingale (\ref{Mmart}) just coincides with naturally centred $L$.

The correlated Brownian motion has  appeared in Bruss and Delbaen \cite{BD2} (Theorem 4.1), as the limit of
 $(X, L)$  centred by their compensators $C_{X}$ and $C_{L}$
under the optimal  (feasible) strategy. 
This connection confirms that the key to the fluctuation problem is 
understanding the nature of the drift component.

\subsection{A feasible version of the stationary strategy}\label{limit-jump}\label{fstationary}

 The strategy driven by   $\psi(t,x)=\sqrt{2/\nu}\,\wedge (1-x)$ 
 is a counterpart of that introduced  by Samuels and Steele in the discrete-time setting \cite{SS}. 
This is a minor modification of the stationary strategy to meet the feasibility condition.
Define the hitting time
$$\tau:=\inf \{t\in [0,1]: X(t)\geq 1-\sqrt{2/\nu}\}$$
with the convention $\inf\varnothing=1$.
The strategy acts as the stationary before $\tau$, and  if $\tau<1$  proceeds with a greedy selection, so, in essence, the selection process is frozen at  time $\tau$.
Using elementary renewal theory arguments, we find asymptotics of the moments
$${\mathbb E}L(1)\sim \sqrt{2\nu}-\frac{2^{3/4}}{\sqrt{3\pi}}\,\nu^{1/4},~~~{\rm Var} L(1)\sim \frac{2^{3/2}}{3}\left( 1-\frac{1}{\pi}\right)\sqrt{\nu}.$$
Hence the strategy is asymptotically optimal in the principal term.

Furthermore, $\widetilde{L}(1)$ converges in distribution to 
${2^{-1/4}}\{ {(\xi_1/\sqrt{3})\wedge \xi_2}\},$
where $\xi_1,\xi_2$ are independent ${\cal N}(0,1)$, see \cite{GS}. The normalised terminal value  $\widetilde{X}(1)$ is nonpositive, and converges in distribution to $-2^{3/4}3^{-1/2}(\eta)_+$, where 
$\eta$ is another standard normal variable and $(\cdot)_+$ denotes the positive part. 
By symmetry of the normal distribution, the hitting time $\tau$ 
assumes value $1$ with probability approaching $\prob(V_1(1)<0)=1/2$, 
and otherwise  $1-\tau$ is of the order $\nu^{-1/4}$.  Comparing with the stationary strategy, one can see that there is an optimality gap of order $\nu^{1/4}$ which occurs due to a premature 
freeze of selection in the event $\tau<1$.

Note that the moments of terminal values satisfy $1-p(1)\sim c_1\nu^{-1/4}, 1-q(1)\sim c_2\nu^{-1/4}$ with some $c_1, c_2>0$, while (\ref{pq3}) overestimates the first as
$1-p(1)=O(\nu^{-1/8})$.

In terms of the normalised running maximum, $\tau$ is the time when $\tX$ hits the straight line connecting points $(0,\nu^{1/4})$ and $(1,0)$.
Since  $\tau\to 1$ in probability,
 $(\tX,\tL)$ has the same functional limit as  under the stationary strategy  on every interval $[0,1-h]$, for $h\in (0,1)$. 
Extending the functional limit to the closed $[0,1]$ leads to a discontinuity at $t=1$. To capture the jump, it is enough to modify the correlated Brownian motion $\boldsymbol W$ 
by  replacing the terminal value $(W_1(1), W_2(1))$    with 
$$(W_1(1)-(W_1(1))_+, W_2(1)-(2/\sqrt{3})(W_1)_+).$$

\subsection{Self-similar asymptotically optimal strategies}

We call strategy  {\it self-similar}  if the control $\psi=\psi_\nu$ has the form
\begin{equation}\label{ss-psi}
\psi (t,x):=(1-x)\delta(\nu(1-t)(1-x)), ~~~(t,x)\in [0,1]^2.
\end{equation}
 for some function 
$\delta:{\mathbb R}_+\to [0,1]$. Note that such a strategy is feasible and $\psi_\nu(0,0)=\delta(\nu)$.
The rationale behind this definition is the following. Assuming $x$ to be the running maximum at time $t$, 
the remaining part of the chain should be selected from the north-east rectangle spanned on $(t,x)$ and $(1,1)$, and  by the optimality principle the subsequence selected from the rectangle should have  maximal 
expected length. Mapping the rectangle onto $[0,1]^2$ it is readily seen that the subproblem 
is an independent replica of  the original problem of optimal selection from the unit square
with  intensity parameter  $\nu(1-x)(1-t)$.  
The martingale (\ref{Mmart}) assumes the form 

\begin{equation}\label{Mmart1}
M(t)=L(t)+F(\nu(1-t)(1-X(t))-F(\nu),
\end{equation}
where the {\it value function} $F$, for given control,  depends  on one variable
$$F(\nu):=\E L_\nu(1),~~F(0)=0.$$

\vskip0.2cm
\noindent
{\bf Assumption.}  From this point on we assume that {\it the selection strategy is self-similar  as defined by {\rm (\ref{ss-psi})}, with function $\delta$ having asymptotics 
\begin{equation}\label{delta-asymp}
\delta(\nu)=\sqrt{{2}/{\nu}} +O\left({\nu}^{-1}\right), ~~~\nu\to\infty.
\end{equation}
}
\vskip0.2cm

The assumption  is central and deserves  comments. Whenever  $\nu(1-x)(1-t)$ is large, (\ref{delta-asymp})  implies asymptotics of the control
\begin{equation}\label{psi-appr}
\psi(t,x)\sim \sqrt{\frac{2(1-x)}{\nu(1-t)}},
\end{equation}
which shows that 
near the diagonal $x=t$ the acceptance window is about the same as for the stationary strategy. Away from the diagonal, the acceptance window is close to that for the stationary strategy
adjusted to the rectangle north-east of $(t,x)$.

It is known \cite{GS} that the optimal strategy satisfies  the asymptotic expansion  
\begin{equation}\label{delta-opt}
\delta^*(\nu)\sim \sqrt{2/\nu}-(3\nu)^{-1}+O(\nu^{-3/2}).
\end{equation}
A minor   adjustment of Theorem 6 in  \cite{GS} shows that if we assume, more generally,   the relation $\delta(\nu)\sim \sqrt{2/\nu}+\beta/\nu$ 
with some parameter $\beta\in{\mathbb R}$,
then  asymptotic expansions of the moments (\ref{Eopt}), (\ref{Vopt}) are still valid, with only constant terms depending on $\beta$.
Using a sandwich argument based on  Lemma \ref{compare}, one can further show that 
under the assumption  (\ref{delta-asymp}) expansions of the moments hold but with constant terms being replaced by some $O(1)$ remainders. 
In particular, condition (\ref{delta-asymp}) ensures the two-term asymptotic optimality (\ref{q1asymp}), equivalent 
to the  asymptotic expansion of the value function,
\begin{equation}\label{exF}
F(\nu)=\sqrt{2\nu}-\frac{1}{12}\log(  \nu+1)+O(1).
\end{equation}
We stress that 
the logarithmic term here (as well as in the counterpart of the variance formula (\ref{Vopt})) is not affected by the remainder in (\ref{delta-asymp}), rather 
appears due to the self-similar adjustment of (a feasible version of) the stationary strategy, as
incorporated in (\ref{psi-appr}). 
The impact of the second term in (\ref{delta-asymp}) on moments of the running maximum will be scrutinised in Section \ref{Moments}.

Approximation (\ref{psi-appr}) is not useful  when $t$ or $x$ are too close to  $1$, so that $\nu(1-t)(1-x)$ varies within $O(1)$.
To embrace the full range of the variables, for the sequel we choose $\beta>1$ large enough to meet the bounds 
\begin{eqnarray}\label{small x}
\left|\psi(t,x)-\sqrt{\frac{2(1-x)}{\nu(1-t)}}\right|&<&\frac{\beta}{\nu(1-t)},~~~{\rm for}~~(t,x)\in[0,1)\times[0,1).
\end{eqnarray}
This will be employed along with the  bound
\begin{eqnarray}
\label{big x}
\psi (t,x)&<&\frac{1}{\nu(1-t)},~~~{\rm for}~~ 1-x< \frac{1}{\nu(1-t)}
\end{eqnarray}
which follows by feasibility.

\section{Generators}

The selection process in Section \ref{limit-jump} demonstrates  one type of possible pathology,  caused by large overshooting the diagonal at times close to $t=1$.
But under (\ref{delta-asymp})  it is not even obvious that  $(\tX,\tL)$  has a sensible limit in $D[0,1]$.
A major technical difficulty in showing the convergence is the singularity of (\ref{psi-appr}) at $t=1$. This will be handled in two steps. 
First, we  bound the time variable away from $t=1$ and show the convergence of the generators on a sufficiently big space of test functions. Then we will apply domination arguments 
to bound fluctuations near the right endpoint, thus
justifying  convergence on the full $[0,1]$.

The processes we consider are not time-homogeneous, therefore by computing generators we include the time variable in the state vector.
From (\ref{knaps}), the generator of the jump process $(X,L)$ is
$${\cal L}_\nu f(t,x,\ell)=f_t(t,x,\ell)+\nu\int_0^{\psi(t,x)}\{f(t,x+u,\ell+1)-f(t,x,\ell)\}{\rm d}u.$$
For the processes centered by $t$ we should include $-f_x-f_\ell$ in the generator. Then, with the change of variables
$$x\to x\nu^{-1/4}+t, ~\ell\to (\ell\nu^{-1/4}+t)\sqrt{2\nu},~~\widetilde{\psi}(t,x):=\nu^{1/4} \psi(t, x\nu^{-1/4}+t)$$
we arrive at the generator of $(\tX,\tL)$
\begin{equation}\label{genXL}
\widetilde{\cal L}_\nu f=f_t -\nu^{-1/4} (f_x+f_\ell)+\nu^{3/4}\int_0^{\widetilde{\psi}(t,x)} \{f(t,x+u,\ell+v)-f(t,x,\ell)\}{\rm d}u, 
\end{equation}
where we abbreviate  $f=f(t,x,\ell)$ etc., and
\begin{equation}\label{vi}
v:=(4\nu)^{-1/4}
\end{equation}

We extend $\widetilde{\cal L}_\nu f$  by $0$ outside   the reachable range of $(\tX,\tL)$.
Note that the range of $\tX(t)$ lies within the bounds
$$
-t\nu^{1/4}\leq x\leq (1-t)\nu^{1/4}.
$$

We fix  $h\in(0,1)$ and focus on  $t\in [0,1-h]$, so achieving uniformly in this range
\begin{equation}\label{psi-ord}
\tpsi(t,x)=O(\nu^{-1/4}),
\end{equation}
and for $k\geq 1$
\begin{equation}\label{psi-k}
\tpsi^k(t,x)=\left( 2-\frac{2x}{\nu^{1/4}(1-t)}\right)^{k/2}\nu^{-k/4}+O(\nu^{-(k+2)/4}),~{\rm for~}x\leq (1-t)\nu^{1/4} -\frac{1}{\nu^{3/4}(1-t)}
\end{equation}
as dictated by the bounds (\ref{small x}), (\ref{big x}).

Now let ${\cal D}$  be the space of  vanishing at infinity functions  $f\in C^3_0([0,1]\times {\mathbb R}^2)$ which satisfy a rapid decrease property 
$$ \sup|x^kf_{\bullet}(t,x,\ell)|<\infty,$$
where $f_{\bullet}$ is any derivative of $f$  of the first or second order and $k>0$.
Set
$$D^{>}_{h,\nu}:=\{(t,x,\ell): t\in[0,1-h],~|x|>\nu^{1/16}\},~~~D^{<}_{h,\nu}:=\{(t,x,\ell): t\in[0,1-h],~|x|\leq\nu^{1/16}\}.$$
We shall be using that for $f\in {\cal D}$ 
\begin{equation}\label{bigg-x}
\lim_{\nu\to\infty}\sup_{D^{>}_{h,\nu}}|\nu^k f_\bullet (x)|=0.
\end{equation}

The integrand in (\ref{genXL}) expands as
$$
f(t,x+u,\ell+v)-f(t,x,\ell)= f_x u+f_\ell v+ \frac{1}{2}f_{xx}\, u^2+f_{x\ell} \,uv+\frac{1}{2}f_{\ell\ell}\,v^2+R,
$$
where the remainder can be estimated as
$$ |R|\leq c \sum_{i=0}^3 u^iv^{3-i},$$ 
with constant $c$ chosen bigger than the maximum absolute value of any third derivative of $f$. Hence for the integrated remainder we have a uniform estimate
$$\nu^{3/4}\left| \int_0^{\tpsi} R du \right|\leq    \nu^{3/4}c \sum_{i=1}^4 \tpsi^i v^{4-i}=    O(\nu^{-1/4}),$$
using (\ref{psi-ord}), (\ref{vi}).

Integrating the Taylor polynomial yields 
$$
\widetilde{\cal L}_\nu f=f_t -\nu^{1/4}(f_x+f_\ell)+\nu^{3/4} \left\{ \frac{1}{2}f_x\, \tpsi^2+f_\ell \,v\tpsi + \frac{1}{6}\,f_{xx} \tpsi^3+\frac{1}{2}f_{x\ell}\,\tpsi^2 v+\frac{1}{2}f_{\ell\ell} v^2\tpsi\right\}+O(\nu^{-1/4}).
$$
Applying (\ref{bigg-x})
\begin{equation}\label{A-range}
\lim_{\nu\to\infty} \sup_{D^{>}_{h,\nu}}|\widetilde{\cal L}_\nu f(t,x,\ell)|=0.
\end{equation}
Thus we focus on the range $D^{<}_{h,\nu}$, where (\ref{small x}) and (\ref{psi-k}) can be employed. 
From (\ref{small x})
$$
-\nu^{-1/4}f_x+\nu^{3/4}\frac{1}{2}f_x\tpsi^2=-\frac{x}{1-t}f_x+O(\nu^{-1/4}).
$$
Observing that in this range  $|x\nu^{-1/4}|\leq \nu^{-3/16}$ for $k>0$ we expand as
  $$\tpsi^k(t,x,\ell)=2^{k/2}\nu^{-k/4} 
-\frac{2^{k/2-1}x}{1-t}
\nu^{-(k+1)/4}+O(\nu^{-(k+1)/4-1/8}),$$
with the remainder estimate being uniform over $D^{<}_{h,\nu}$. The remaining calculations  is  a careful  book-keeping using this formula and that the derivatives are uniformly bounded:

\begin{eqnarray*}
-\nu^{1/4}f_\ell +\nu^{3/4} f_\ell v\tpsi&=& -\frac{x}{2(1-t)}f_\ell+O(\nu^{-1/8}),\\
\nu^{3/4}\frac{1}{6}f_{xx}\tpsi^3&=&\frac{\sqrt{2}}{3}f_{xx}+O(\nu^{-3/16}),\\
\nu^{3/4} \frac{1}{2}f_{x\ell}\tpsi^2 v&=&   \frac{1}{\sqrt{2}}f_{x\ell}+ O(\nu^{-3/16}),\\
\nu^{3/4} \frac{1}{2}f_{\ell\ell}v^2\tpsi&=& \frac{1 }{2\sqrt{2}}f_{\ell\ell}+  O(\nu^{-3/16}).
\end{eqnarray*}

Define operator
\begin{equation*}
\widetilde{\cal L}f:=f_t -\frac{x}{1-t}f_x-\frac{x}{2(1-t)}f_\ell +  \frac{{\sigma_1}^2}{2} f_{xx} +\frac{{\sigma_2}^2}{2} f_{\ell\ell}+ \sigma_1 \sigma_2 \rho f_{x\ell},
\end{equation*}
with $\sigma_1,\sigma_2$, and $\rho$ given by (\ref{sigmas}).

\begin{lemma}\label{core-conv}   For $f\in {\cal D}$ and $h\in (0,1)$
$$
\lim_{\nu\to\infty} \sup_{(t,x,\ell)\in [0,1-h]\times{\mathbb R}^2} |\widetilde{\cal L}_\nu f(t,x,\ell)-\widetilde{\cal L}f(t,x,\ell)|=0
$$
\end{lemma}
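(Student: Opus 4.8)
The plan is to combine the two regimes already isolated in the preceding calculation: the ``large'' region $D^{>}_{h,\nu}$, where the transversal coordinate is pushed out to $|x|>\nu^{1/16}$, and the ``small'' region $D^{<}_{h,\nu}$, where $|x|\le\nu^{1/16}$. Since these two sets cover the strip $[0,1-h]\times\mathbb{R}^2$, the supremum of $|\widetilde{\cal L}_\nu f-\widetilde{\cal L}f|$ over the whole strip is the larger of its suprema over each piece, so it is enough to show that both tend to $0$.

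On $D^{>}_{h,\nu}$ I would argue that each operator vanishes in the limit separately. For $\widetilde{\cal L}_\nu f$ this is exactly (\ref{A-range}). For the limit operator, I invoke the rapid-decrease property defining $\mathcal{D}$, packaged as (\ref{bigg-x}): with $t\le 1-h$ one has $1-t\ge h$, so a coefficient of the form $\frac{x}{1-t}$ is bounded by $|x|/h$, and then $\bigl|\frac{x}{1-t}f_x\bigr|\le\frac1h|x^2f_x|/|x|\le \frac{C}{h}\,\nu^{-1/16}$, while the pure second-order terms obey $|f_{\bullet\bullet}|\le C/|x|<C\nu^{-1/16}$; the same bound, uniform in $\ell$ because $\sup|x^kf_\bullet|<\infty$ is a supremum over all $(t,x,\ell)$, forces $\sup_{D^{>}_{h,\nu}}|\widetilde{\cal L}f|\to0$. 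Hence the difference vanishes on this region.

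On $D^{<}_{h,\nu}$ the computation is already in hand, and I would simply assemble the term-by-term identities displayed just above the statement. The Taylor remainder integrates to $O(\nu^{-1/4})$ by (\ref{psi-ord}) and (\ref{vi}); the two drift contributions collapse to $-\frac{x}{1-t}f_x+O(\nu^{-1/4})$ and $-\frac{x}{2(1-t)}f_\ell+O(\nu^{-1/8})$ via (\ref{small x}) and the refined expansion of $\tpsi^k$; and the three second-order terms converge to $\frac{\sigma_1^2}{2}f_{xx}$, $\sigma_1\sigma_2\rho\,f_{x\ell}$, and $\frac{\sigma_2^2}{2}f_{\ell\ell}$ with errors $O(\nu^{-3/16})$, matching the constants $\frac{\sqrt2}{3},\frac1{\sqrt2},\frac1{2\sqrt2}$ against (\ref{sigmas}). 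Collecting the slowest of these, $\widetilde{\cal L}_\nu f-\widetilde{\cal L}f=O(\nu^{-1/8})$ uniformly on $D^{<}_{h,\nu}$, the bottleneck being the $f_\ell$ drift term.

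The one point requiring genuine care, and the step I expect to be the main obstacle, is the \emph{uniformity} in $(t,x,\ell)$ of every remainder, as opposed to mere pointwise control. This rests on two facts acting together: that $t$ is bounded away from $1$ by $h$, which tames the pole of (\ref{psi-appr}) and keeps $\tpsi(t,x)=O(\nu^{-1/4})$ uniformly, and that on $D^{<}_{h,\nu}$ the bound $|x\nu^{-1/4}|\le\nu^{-3/16}$ makes the correction terms in the $\tpsi^k$ expansion uniformly small while all derivatives of $f$ stay uniformly bounded, $f$ being in $C^3_0$. Once these two uniform controls are secured, the constants in the $O(\cdot)$ symbols may be taken independent of the state, and the two supremum estimates combine to give the claim.
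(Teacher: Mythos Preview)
Your proposal is correct and follows the same approach as the paper's proof: split the strip into $D^{>}_{h,\nu}$ and $D^{<}_{h,\nu}$, use the rapid-decrease property (\ref{bigg-x}) together with (\ref{A-range}) to kill both operators on the first region, and invoke the term-by-term expansions already displayed to control the difference on the second. Your write-up is in fact more explicit than the paper's two-line proof; the only term you do not mention on $D^{>}_{h,\nu}$ is $f_t$, but it is handled by the same $\sup|x^k f_\bullet|<\infty$ bound as the second-order derivatives.
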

\begin{proof}
The supremum over $D^{>}_{h,\nu}$ goes to zero since by (\ref{bigg-x}) the analogue of (\ref{A-range}) holds true for $\widetilde{\cal L}$. 
The supremum over $D^{<}_{h,\nu}$ goes to zero by the above expansions.

\end{proof}

Operator $\widetilde{\cal L}$ is the generator of a Gaussian diffusion process 
which satisfies   the stochastic differential equation
\begin{eqnarray}\label{OU}
\label{FirstSDE}
{\rm d}Y_1(t)&=& -\frac{Y_1(t)}{1-t}\,{\rm d}t\,+{\rm d}W_1(t),\\
\label{SecondSDE}
{\rm d}Y_2(t)&=& -\frac{Y_1(t)}{2(1-t)}\,{\rm d}t\,+{\rm d}W_2(t),
\end{eqnarray}
with zero initial value,
where ${\boldsymbol W}=(W_1,W_2)$ is the two-dimensional Brownian motion with covariance $\boldsymbol\Sigma$ introduced in 
(\ref{Sigma}).

From the equation for the first component (\ref{FirstSDE}), 
it is seen that $Y_1$  is a Brownian bridge 
\begin{equation}\label{BrBr}
Y_1(t)=(1-t)\int_0^t \frac{{\rm d}W_1(s)}{1-s},
\end{equation}
with the covariance  function ${\rm Cov}(Y_1(s),Y_1(t))= \sigma_1 s(1-t)$,   $0\leq s\leq t\leq 1.$ 
In particular, $Y_1(1)=0$. We shall discuss the second component later on.

The space   ${\cal D}$ is dense in a larger space $C^3_0([0,1-h]\times{\mathbb R}^2)$. 
Since the differentiability properties of functions are preserved under averaging over normally distributed translations,
$\cal D$ is invariant under the semigroup of $\boldsymbol Y$.
Thus by Watanabe's theorem (see \cite{Kallenberg}, Proposition 17.9) $\cal D$ is a core of  operator $\widetilde{\cal L}$. 
The above Lemma \ref{core-conv} and  Theorem 17.25  from \cite{Kallenberg} now imply
  weak convergence 
\begin{equation}\label{weak-h}
(\tX_\nu,\tL_\nu)\Rightarrow (Y_1,Y_2)~~{\rm in ~~}D[0,1-h]
\end{equation}
for every $h\in (0,1)$.    
A closer inspection of the above approximation errors suggests  that the quality of convergence deteriorates as $h\to 0$.

We encountered the  Brownian motion   ${\boldsymbol W}$  in connection with the free-endpoint stationary strategy in Sections \ref{stationary} and \ref{fstationary}.
Now we  see that  the variable  control (\ref{psi-appr})  causes a drift  that, in the $\nu\to\infty$ limit, forces  
the running maximum to timely arrive  at the north-east corner of the square.

\section{Convergence to diffusion: end of the proof}

The  martingale problem for $\widetilde{\cal L}$ is  well-posed on the complete interval, and the SDE (\ref{OU}) has a unique strong solution.
This suggests to extend convergence (\ref{weak-h}) to the full $[0,1]$. To that end, we need 
to monitor the behaviour of $\widetilde{\cal L}_\nu f$ for $t$ close to 1. 
Estimates in 
Bruss and Delbaen (\cite{BD2}, p. 294) show that 
 $\tX_\nu(1)\to 0$ in probability, which agrees neatly with the Brownian bridge limit,
but this still does not exclude giant fluctuations of the pre-limit process near $t=1$.

A similar kind of difficulty appears by the martingale approach to the classic problem of convergence of the empirical distribution function \cite{Hmaladze, JS}. 
The proof found in Jacod and Shiryaev (see \cite{JS}, p.561)
handles the nuisance
by  exploiting the time reversibility   of the Brownian bridge.
Our argument will rely  on the self-similarity.

Since (\ref{weak-h}) entails convergence of finite-dimensional distributions for times $t<1$ and ensures that the modulus of continuity behaves properly over $[0,1-h]$,
to justify tightness of $\tX_\nu$'s, and hence their convergence on $[0,1]$, it will be enough to show that
\begin{equation}\label{h-cond}
\lim_{h\to 0} \limsup_\nu
\prob(\sup_{t\in[1-h,1]}|\tX_\nu(t)|> h^{1/4})=0.
\end{equation}

Define $\xi_{\nu,h}$ by setting
$$\tX_\nu(1-h)=\sigma_1\sqrt{h(1-h)}\,\xi_{\nu,h}.$$
Since $\tX_\nu(1-h)\stackrel{d}{\to} Y_1(1-h)$ the distribution of $\xi_{\nu,h}$ is close to ${\cal N}(0,1)$ for large $\nu$.

By self-similarity of the selection strategy, $((X_\nu (t)-t),~t\in[1-h,1])$ has the same distribution as $(h^{-1}(X_{\nu h^2}(t)-t),~t\in [0,1])$ with the initial value $X_{\nu h^2}(0)= \nu^{-1/4}\sigma_1\sqrt{(1-h)/h}\,\xi_{\nu,h}$, as is seen
by zooming in  the  corner square north-east of the point $(1-h,1-h)$ with factor $h^{-1}$.  Changing variable $\nu h^2\to\nu$,   (\ref{h-cond}) translates as a compact containment condition
\begin{equation}\label{c-cont}
\lim_{h\to 0}\limsup_\nu \prob(\sup_{t\in[0,1]}|\tX_\nu(t)|>h^{-1/4})=0
\end{equation}
under the initial value $\tX_\nu (0)=\sqrt{1-h} \,\xi_{\nu,h}$.

To verify (\ref{c-cont}) we shall squeeze the running maximum $X$ between $X^\downarrow$ and $X^\uparrow$ whose normalised versions satisfy the compact containment 
condition.
We force the majorant and the minorant to live on the opposite sides of the diagonal. Both have  independent, almost stationary increments, so that functional limits 
can be readily identified.
For simplicity we will assume $X_\nu(0)=0$. The general case with  $X_\nu(0)$ of the order $\nu^{-1/4}$ can be handled by the same method.

\subsection{Majorant}\label{majorant}

Define process $X^\uparrow= X^\uparrow_\nu$ as solution to 

$${\rm d}X^\uparrow(t)=  \int_0^{\psi^\uparrow(t)}x{\Pi^*} ({\rm d}t{\rm d}x) + 1(X^\uparrow (t)=t){\rm d}t ,$$ 
$X^\uparrow(0)=K\nu^{-1/2}$ for some big enough $K>0$, with control
\begin{equation*}
\psi^\uparrow(t):=\sqrt{\frac{2}{\nu}}+ \frac{\beta}{\nu(1-t)}\,1(t\leq 1-K\nu^{-1/2})
\end{equation*}
not depending on $x$. Notation $1(\cdots)$ is used for indicators.
The process never drops below the line $x=K\nu^{-1/2}+t$, and whenever the line is hit the path drifts along it for some time.
By the construction, above the diagonal  the process $X^\uparrow$ increases faster than $X$, and is, in fact, a majorant.
\begin{lemma}\label{maj} By coupling via  {\rm (\ref{knaps})}, $  X^\uparrow \geq X$ a.s.
\end{lemma}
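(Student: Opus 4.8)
The plan is to establish the pathwise domination by a first-passage argument that exploits the coupling through the common driving measure $\Pi^*$ in (\ref{knaps}) together with the comparison principle of Lemma \ref{compare}. Both $X$ and $X^\uparrow$ are nondecreasing, as all their increments are positive, and $X^\uparrow$ carries in addition the continuous reflecting drift, so by construction $X^\uparrow(t)\geq K\nu^{-1/2}+t$ for every $t$. Since $X^\uparrow(0)=K\nu^{-1/2}>0=X(0)$, it is enough to exclude a first crossing. I would set $\tau:=\inf\{t:\,X(t)>X^\uparrow(t)\}$ and assume $\tau\leq 1$. Because the drift of $X^\uparrow$ is continuous while the jumps of the two processes are coupled through $\Pi^*$, a crossing at $\tau$ can only be produced by a jump of $X$ left unmatched by $X^\uparrow$; thus $X(\tau^-)\leq X^\uparrow(\tau^-)$ whereas $X(\tau)=X(\tau^-)+\Delta>X^\uparrow(\tau)\geq K\nu^{-1/2}+\tau$, with jump size $\Delta\leq\psi(\tau,X(\tau^-))$.

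I would then split according to the position of $X(\tau^-)$ relative to the diagonal and of $\tau$ relative to the endpoint. If $X(\tau^-)\geq\tau$ and $\tau\leq 1-K\nu^{-1/2}$, then $1-X(\tau^-)\leq 1-\tau$, and (\ref{small x}) yields
$$\psi(\tau,X(\tau^-))<\sqrt{\frac{2(1-X(\tau^-))}{\nu(1-\tau)}}+\frac{\beta}{\nu(1-\tau)}\leq\sqrt{\frac{2}{\nu}}+\frac{\beta}{\nu(1-\tau)}=\psi^\uparrow(\tau),$$
so by Lemma \ref{compare} the jump of $X$ is matched by an equal jump of $X^\uparrow$, giving $X^\uparrow(\tau)\geq X^\uparrow(\tau^-)+\Delta\geq X(\tau)$ and contradicting the crossing. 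If instead $X(\tau^-)<\tau$, the bound $X^\uparrow(\tau^-)\geq K\nu^{-1/2}+\tau$ leaves a buffer of at least $K\nu^{-1/2}+(\tau-X(\tau^-))$, so a crossing would force $\Delta>K\nu^{-1/2}$. Writing $b:=1-\tau\geq K\nu^{-1/2}$ and $c:=\tau-X(\tau^-)>0$ and feeding (\ref{small x}) into the inequality, I would check that $\psi(\tau,X(\tau^-))<K\nu^{-1/2}+c$ once $K$ is fixed large enough; hence $\Delta\leq\psi$ falls short of the buffer and no crossing is possible.

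It remains to treat the endpoint zone $\tau>1-K\nu^{-1/2}$, where the singular correction in $\psi^\uparrow$ is switched off. There the reflecting drift gives $X^\uparrow(\tau)\geq K\nu^{-1/2}+\tau\geq 1$, while feasibility of $\psi$ forces $X(\tau)\leq 1$; thus $X(\tau)\leq 1\leq X^\uparrow(\tau)$ and again no crossing occurs. The three cases are exhaustive and each contradicts $\tau\leq 1$, which proves $X^\uparrow\geq X$ on $[0,1]$ almost surely. The delicate point, and the very reason for the buffer $K\nu^{-1/2}$ and the indicator in $\psi^\uparrow$, is the below-diagonal regime near $t=1$: there $\psi$ inherits the pole of (\ref{psi-appr}) and may far exceed $\sqrt{2/\nu}$, so the naive control comparison breaks down. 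The argument survives only because the buffer dominates the window away from the endpoint, while feasibility and $X^\uparrow\geq 1$ take over in the endpoint zone; verifying that these two mechanisms overlap, with $K$ chosen uniformly in $\nu$, is the main obstacle.
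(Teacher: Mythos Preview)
Your proposal is correct and follows essentially the same route as the paper: a first-crossing argument with the same three-way case split (above-diagonal with $\tau\leq 1-K\nu^{-1/2}$, below-diagonal with $\tau\leq 1-K\nu^{-1/2}$, and the endpoint zone $\tau>1-K\nu^{-1/2}$), the same use of Lemma~\ref{compare} in the above-diagonal case, the same buffer inequality $\psi(\tau,x)<(\tau-x)+K\nu^{-1/2}$ in the below-diagonal case, and the same feasibility-versus-reflection argument at the endpoint. One small expository slip: in the below-diagonal case you write that a crossing ``would force $\Delta>K\nu^{-1/2}$'', but in fact it forces $\Delta>K\nu^{-1/2}+c$; you then correctly verify the stronger bound $\psi<K\nu^{-1/2}+c$, so the logic is intact.
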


\begin{proof} 

By the  virtue of  (\ref{small x}),  (\ref{big x})  and definition of $\psi^\uparrow$ we have   $\psi^\uparrow(t,x)>\psi(t,x)$  for $x>t, ~  t\leq 1-K\nu^{-1/2}$.   
Hence by  Lemma \ref{compare}, ${\rm d}\{X^\uparrow( t)-X(t)\}>0 $ conditional on  $X^\uparrow(t)>X(t)>t$ at  time $t<1-K\nu^{-1/2}$.

Initially $X^\uparrow(0)>X(0)$, and $X^{\uparrow}(t)>1>X(t)$ for $t>1-K\nu^{-1/2}$.
Hence the only way the paths can cross is that $X$ overjumps $X^\uparrow$ from some position $x<t\leq X^\uparrow(t)$ at some time
$t\leq 1-K\nu^{-1/2}$. The latter  possibility is excluded, because
\begin{eqnarray*}
\psi(t,x)&<&\sqrt{\frac{2}{\nu}}\left(1+\frac{t-x}{1-t} \right) +\frac{\beta}{\nu(1-t)}< \sqrt{\frac{2}{\nu}} \left(1+\frac{t-x}{2(1-t)} \right) +\frac{\beta}{\nu(1-t)}\\ &\leq&
 \sqrt{\frac{2}{\nu}}+\frac{t-x}{K\sqrt{2}}+\frac{\beta}{K\sqrt{\nu}}<t-x+\frac{K}{\sqrt{\nu}}
\end{eqnarray*}
for $K$ chosen big enough.
\end{proof}

Let 
$$S(t):= \int_0^t\int_0^{\psi^\uparrow(t)}x \Pi^* ({\rm d}s{\rm d}x)-t.$$ 
This is a process with independent increments, which we can split into two independent components
$$S(t)=\left(\int_0^t \int_0^{\sqrt{2/\nu}}x\Pi^*({\rm d}s{\rm d}x)-t\right)+ \int_0^t\int_{\sqrt{2/\nu}}^{\psi^\uparrow(t)}x\Pi^*({\rm d}s{\rm d}x).$$
The mean value of the second part is estimated as
$$\frac{2 \nu }{\sqrt{\nu} }\int_0^{1-K/\sqrt{\nu}} \frac{\beta}{\nu(1-t)}{\rm d}t=O\left( \frac{\log \nu}{\sqrt{\nu}}\right),$$
and the first is a compensated compound Poisson process. Thus    $\nu^{1/4}S\Rightarrow W_1$ as $\nu\to\infty$.

Processes akin to $(X^\uparrow(t)-t, ~t\in[0,1])$ are common in applied probability \cite{Asmussen, Borovkov}. In particular, by the interpretation as the content of a single-server ${\rm M/G/1}$ queue,  the positive increments present jobs that
 arrive by Poisson process and are measured in terms of the demand on the service time.  The downward drift occurs due to the unit processing rate when the server is busy.
Borrowing a useful identity, 
$$X^\uparrow(t)-t=   S(t)-\inf_{u\in[0,t]} S(u),$$
we conclude on the weak convergence
 $(\nu^{1/4} (X^\uparrow(t)-t),~t\in [0,1])\Rightarrow |W_1|$ 
to a reflected  Brownian motion.

 \subsection{Minorant}\label{minorant}

This time we define $X^\downarrow$ by (\ref{knaps}) with  control
\begin{eqnarray*}
	\psi^\downarrow (t,x)=\begin{cases}    \left(\sqrt{\frac{2}{\nu}}-\frac{\beta}{\nu(1-t)}\right)  \wedge (t-x), ~~{\rm for~~}~0\leq t\leq 1-{K}/{\sqrt{\nu}},\\
		~~~~~0,~~~{\rm for~~}  ~~1-{K}/{\sqrt{\nu}}<  t\leq 1.
	\end{cases}  
\end{eqnarray*}
where $K$ is sufficiently large.
We can regard this as a suboptimal strategy that never selects marks $x>t$.
Starting at state $0$, the running maximum process stays below the diagonal throughout, and gets frozen at $t=1-K/\sqrt{\nu}$.
A counterpart of Lemma \ref{maj}, $X^\downarrow<X$ a.s., is readily checked. 

Switching general $\beta>0$ to $\beta=0$ 
impacts $\E X^\downarrow(t)$ by $O(\nu^{-1/2}\log\nu)$ uniformly in $t\in[0,1]$. Indeed,
the  jumps  are bounded by $2/\sqrt{\nu}$, and the expected number of jumps increases  by $O(\log \nu)$.

Assuming $\beta=0$, the process $(X^\downarrow(t)-t,~t\in[0,1-K\nu^{-1/2}])$ is a compensated compound Poisson process on the negative halfline, 
with reflection at $0$. We have therefore 
$$(\nu^{1/4} (X^\downarrow(t)-t),~t\in [0,1])\Rightarrow -|W_1|.$$
A rigorous proof can be obtained by inspecting convergence of the generator  acting on the functions $f\in{\cal D}$ with $f_x(t,0)=0.$

\subsection{The length process near termination}

Having established weak  convergence of $\tX$,  we wish to estimate fluctuations of $\tL$   near $t=1$. 
To that end, we aim to verify that
\begin{equation}\label{Lterm}
\lim_{h\to 0}\limsup_\nu 
\prob(\sup_{t\in[1-h,1]} |\tL(t)-\tL(1-h) |>\epsilon)=0.
\end{equation}

Write $s=1-h$ and split the difference in (\ref{Lterm}) in three parts
$$\tL(t)-\tL(s)= \nu^{1/4} P_1(t)- \nu^{1/4}P_2(t)+\nu^{1/4}P_3(t),$$ 
where
\begin{eqnarray*}
P_1(t)&:=&
(2\nu)^{-1/2}\{M(t)-M(s)\},\\
P_2(t)&:=& 
(2\nu)^{-1/2}F(\nu(1-t)(1-X(t)))-(1-t),\\
P_3(t)&:=&(2\nu)^{-1/2}F(\nu(1-s)(1-X(s)))-(1-s).
\end{eqnarray*}

From (\ref{exF}),
\begin{equation*}
\lim_{\nu\to\infty} \sup_{z\in [0,1]}\nu^{1/4} |(2\nu)^{-1}F(\nu z)-z|=0.
\end{equation*}
Using this,
definition of $\tX$ and that $|1-\sqrt{1-z}|\leq |z|$ for $z<1$ we obtain
\begin{align*}
\begin{split}
&|P_2(t)| \leq      | \sqrt{(1-t)(1-X(t))}-(1-t)|      +      \\
& \{(2\nu)^{-1/2}F(\nu(1-t)(1-X(t)))-\sqrt{(1-t)(1-X(t))}  \}   \leq \\
 &\left|(1-t)\left(\sqrt{1-\frac{\tX(t)}{\nu^{1/4}(1-t)}}-1\right)\right|+ \sup_{z\in [0,1]} |(2\nu)^{-1}F(\nu z)-z|\leq \\
&\nu^{-1/4}{|\tX(t)|}+\sup_{z\in [0,1]} |(2\nu)^{-1}F(\nu z)-z|
=\nu^{-1/4}{|\tX(t)|}+o(\nu^{-1/4}),
\end{split}
\end{align*}
so from (\ref{c-cont})
\begin{equation}\label{Spart}
 \lim_{h\to 0}\limsup_\nu 
\prob(\sup_{t\in[1-h,1]} \nu^{1/4}|P_2(t)|>\varepsilon/3)=0.
\end{equation}
This  relation also holds for $P_3$.

For the first part, apply Doob's maximal inequality
\begin{equation}\label{Fpart}
\prob\left(\sup_{t\in[1-h,1]} \nu^{1/4}|P_1(t)|>\varepsilon/3\right)\leq \frac{9}{2 \varepsilon^{2}\sqrt{\nu}} \,\,{\rm Var}\{ M(1)-M(1-h)\}.
\end{equation}
In terms of  the quadratic variation  (see  \cite{Bremaud}, Chapter 2)
$$
{\rm Var}\{ M(1)-M(s)\}=\me\int_s^{1} \nu \,\psi(t,X(t))\,\varphi(t,X(t)){\rm d}t,
$$
where
$$\varphi(t,x)=\me \{1+F(\nu(1-t)(1-x-U\psi(t,x)))-F(\nu(1-t)(1-x))\}^2$$
(with $U$ uniform on $[0,1]$) is the mean-square size of the generic jump of $M$.
Under the optimal strategy $0\leq\varphi(t,x)\leq 1$ (finer estimates are in \cite{BD2}, Section 4), and from (\ref{exF})  
and (\ref{delta-asymp}) we have a 
 uniform bound $|\varphi(t,x)|<c$.
Whence
$$
{\rm Var}\{ M(1)-M(1-h)\}<c \,\me\int_{1-h}^{1} \nu \,\psi(t,X(t)){\rm d}t= c \,\me\,\{L(1)-L(1-h)\}<c \sqrt{2\nu h},
$$
the probability in (\ref{Fpart}) is estimated as $O(\sqrt{h})$, and (\ref{Lterm}) follows from this and (\ref{Spart}).

\section{Main result}

By the domination argument, tightness of $(\tX_\nu, \tL_\nu)$ follows on the whole $[0,1]$, and we arrive at our main result.
\begin{thm}\label{main res} The normalised running maximum and the length process {\rm(\ref{scale})} driven by a control satisfying {\rm (\ref{ss-psi})} and {\rm (\ref{delta-asymp})} 
(in particular, under the optimal online selection strategy)
converge weakly in the Skorokhod space $D[0,1]$,
\begin{equation*}
(\tX_\nu,\tL_\nu)\Rightarrow (Y_1,Y_2), ~~~{\rm as~~}\nu\to\infty,
\end{equation*}
 where the limit  bivariate process is a Gaussian diffusion defined by the equations {\rm (\ref{OU}), (\ref{SecondSDE})} with zero initial conditions.
\end{thm}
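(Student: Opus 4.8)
The plan is to prove weak convergence in $D[0,1]$ by the classical route of finite-dimensional convergence plus tightness, with the entire difficulty concentrated at the terminal time $t=1$. For the interior of the interval the work is already done: Lemma \ref{core-conv} gives the convergence of generators on $[0,1-h]\times\mr^2$, and since ${\cal D}$ is a core of $\widetilde{\cal L}$ (Watanabe's theorem) one invokes Kallenberg's Theorem 17.25 to obtain the weak convergence (\ref{weak-h}) in $D[0,1-h]$ for each fixed $h\in(0,1)$. This simultaneously delivers convergence of the finite-dimensional distributions of $(\tX_\nu,\tL_\nu)$ towards those of $(Y_1,Y_2)$ at all times $t_1<\cdots<t_k<1$, identifies the limit as the diffusion generated by $\widetilde{\cal L}$, namely the unique strong solution of (\ref{OU})--(\ref{SecondSDE}), and controls the oscillations on every $[0,1-h]$. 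Because the jumps of $(\tX_\nu,\tL_\nu)$ are $O(\nu^{-1/4})$ and wash out in the limit, the limit has continuous paths, so I would seek $C$-tightness on the closed interval.

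The crux is to upgrade (\ref{weak-h}) from $[0,1-h]$ to $[0,1]$, that is, to show that the terminal block $[1-h,1]$ contributes negligibly to the modulus of continuity, uniformly in $\nu$, as $h\to0$. For the running maximum this is exactly the compact-containment bound (\ref{h-cond}); the self-similarity of the strategy rescales it into (\ref{c-cont}), which I would verify by the sandwich $X^\downarrow\le X\le X^\uparrow$ provided by Lemma \ref{maj} together with its minorant analogue. Both bounding processes have independent, almost stationary increments and normalise to $\pm|W_1|$, so they satisfy compact containment, and the squeeze transfers the property to $\tX_\nu$. For the length process the corresponding statement (\ref{Lterm}) follows from the splitting $\tL(t)-\tL(s)=\nu^{1/4}(P_1-P_2+P_3)(t)$: the martingale term $P_1$ is bounded by Doob's inequality together with the quadratic-variation estimate, yielding an $O(\sqrt h)$ probability, while $P_2$ and $P_3$ are controlled through (\ref{c-cont}) and the value-function expansion (\ref{exF}).

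Assembling the two blocks gives tightness of $(\tX_\nu,\tL_\nu)$ on all of $[0,1]$; combined with the finite-dimensional convergence at interior times and the continuity of $(Y_1,Y_2)$, the standard criteria for weak convergence in $D[0,1]$ then yield $(\tX_\nu,\tL_\nu)\Rightarrow(Y_1,Y_2)$. The terminal value is consistent, since $Y_1(1)=0$ matches the Bruss--Delbaen estimate $\tX_\nu(1)\tp0$. The principal obstacle is the pole of the limiting drift coefficients $-x/(1-t)$ and $-x/(2(1-t))$ at $t=1$: this singularity is precisely what bars a direct generator argument on the closed interval and forces the detour through the majorant--minorant construction and the separate control of the length fluctuations near termination. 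Once these endpoint estimates are in hand, the extension from $[0,1-h]$ to $[0,1]$ is routine.
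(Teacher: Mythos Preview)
Your proposal is correct and follows the paper's own argument essentially step for step: generator convergence on $[0,1-h]$ via Lemma~\ref{core-conv} and Kallenberg's Theorem~17.25, the self-similarity reduction of (\ref{h-cond}) to (\ref{c-cont}), the majorant--minorant sandwich of Sections~\ref{majorant}--\ref{minorant} for $\tX_\nu$, and the three-term splitting $P_1-P_2+P_3$ with Doob's inequality and (\ref{exF}) for $\tL_\nu$. There is no substantive deviation from the paper's route.
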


We observed already that $Y_1$ is the Brownian bridge (\ref{BrBr}) and from (\ref{SecondSDE})
$$Y_2(t)=\frac{Y_1(t)}{2}-\frac{W_1(t)}{2}+W_2(t),$$
so splitting the martingale part in independent components, we get, explicitly,
\begin{equation}\label{Y2form}
Y_2(t)= \int_0^t \frac{(1-t)}{2(1-s)}{\rm d}W_1(s)+ \frac{1}{4}\,W_1(t)+ \left(W_2(t)-\frac{3}{4}\,W_1(t)  \right),
\end{equation}
which is a sum of a Brownian motion, derived Brownian bridge,   and another independent Brownian motion.

To  find the covariance structure, it is convenient to resort to matrix calculations. 
We may write the solution to (\ref{FirstSDE}), (\ref{SecondSDE}) as
$$\boldsymbol{Y}(t)^T =  e^{a(t)}\int_0^t e^{-a(u)}{\rm d}{\boldsymbol W}(u)^T,$$    
where
$$a(t):=A\int_0^t  \frac{1}{1-u}{\rm d}u   =A \log{(1-t)}, ~~~~~ A:= \begin{pmatrix}1&0\\ \frac{1}{2} &0 \end{pmatrix},$$
which yields by the It{\'o} isometry
$$\me\{\boldsymbol{Y}(s)^T\boldsymbol{Y}(t)\}=\int_0^t e^{a(s)-a(u)}{\boldsymbol\Sigma}e^{(a(t)-a(u))^T}{\rm d}u,~~~~~0\leq s\leq t\leq 1.$$
Since $A$ is an idempotent matrix, the exponents are readily calculated as
\begin{eqnarray*}
e^{a(t)} &=& \sum_{i=0}^\infty \frac{A^i (\log{(1-t)})^i}{i!} = I + A\sum_{i=1}^\infty \frac{(\log{(1-t)})^i}{i!} = I- tA = 
\large{
\begin{pmatrix}[1.2]
1-t & 0 \\
-\frac{t}{2} & 1
\end{pmatrix}
},\\
e^{-a(t)}&=& 1+\frac{t}{1-t}A=
\large{
\begin{pmatrix}[1.2]
\frac{1}{1-t} & 0 \\
\frac{t}{2(1-t)} & 1
\end{pmatrix}.
}
\end{eqnarray*}
With a minor help of {\tt Mathematica} we arrive
at the cross-covariance matrix
\begin{eqnarray*}
\mathbb{E}\{\boldsymbol{Y}(s)^T\boldsymbol{Y}(t)\} &=&
{\large
\begin{pmatrix}[1.8]
\frac{2\sqrt{2} \, s(1-t)}{3} & \frac{2s(1-t)-(1-s)\log{(1-s)}}{3\sqrt{2}} \\
\frac{(1-t) (2s-\log{(1-s)})}{3\sqrt{2}}
 & \frac{2s(2-t)-(2-s-t)\log{(1-s)})}{6\sqrt{2}}
\end{pmatrix},
}
\end{eqnarray*} 
where $0\leq s\leq t\leq 1$.

The following graphs illustrate the covariance structure of ${\boldsymbol Y}(t)$.
\begin{figure}[h!]
	\centering
	\begin{subfigure}[b]{0.45\linewidth}
		\includegraphics[width=\linewidth]
{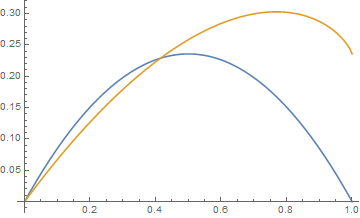}

		\caption{${\color{YellowOrange} {\rm Var} Y_1(t)}, {\color{MidnightBlue} {\rm Var} Y_2(t)}$}
	\end{subfigure}
	\begin{subfigure}[b]{0.45\linewidth}
		\includegraphics[width =\linewidth]{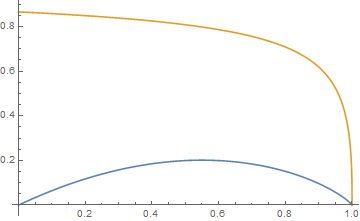}
		\caption{${\color{MidnightBlue} {\rm Cov}(Y_1(t), Y_2(t))},   {\color{YellowOrange} {\rm Corr}(Y_1(t), Y_2(t))}  $}
	\end{subfigure}
\end{figure}

The limit length process $Y_2$ is {\it not} Markovian, since its covariance function does not satisfy the factorisation criterion (see \cite{MarcusRosen}, p. 148).
The sum of  two first terms in (\ref{Y2form}) is non-Markovian too.

\section{Derived processes}

From Theorem \ref{main res} follow  functional limits for 
 normalised compensators and martingale (\ref{Mmart}) 
\begin{eqnarray*}
\nu^{1/4}(C_X(t)-t, ~t\in [0,1])&\Rightarrow &Y_1-W_1,\\
2\nu^{1/4}\left(\frac{C_L(t)}{\sqrt{2\nu}}-t, ~t\in [0,1]\right)&\Rightarrow&  Y_1-W_1,\\
\sqrt{2}\nu^{-1/4}M &\Rightarrow & 2W_2-W_1 \stackrel{d}{=} W_1,
\end{eqnarray*}
 with account of (\ref{IdentW}).
A counterpart of  (\ref{2L-X}) becomes
$$ \tL-2\tX\Rightarrow W_1.$$
Notably, the limit distributions for $t=1$ are all the same ${\cal N}(\sigma_1^2,0)$.

For a  normalised square-root process
$$\widetilde{Z}(t):= \nu^{1/4}\left( \frac{Z(t)}{\sqrt{2\nu}}-(1-t) \right) $$
we have  $\widetilde{Z}\Rightarrow -\frac{1}{2}Y_1$.
In \cite{GS} we showed that the range of $Z$ at big distance  from $0$ can be split in almost independent renewal cycles with distribution close to that of $(E/2+U)/\sqrt{2}$,
where $E$ and $U$ are independent standard exponential and uniform variables.

From these limit relations the result of \cite{BD2} on the  joint convergence of normalised compensated $X$ and $L$ to $\boldsymbol W$ easily follows.
Bruss and Delbaen also proved the Brownian limit for the martingale $M$, which by virtue of $M(1)=L(1)-F(\nu)$ lead them to the central limit theorem for the total length $L(1)$.

It is of interest to look at the  distributions of  the pairs $(X(t),C_X(t))$ and $(L(t),C_L(t))$,
to capture dependence between the processes and their compensators.
In the  $\nu\to\infty$ limit  these approach the bivariate normal distributions of 
 $(Y_1(t), Y_1(t)-W_1(t))$ and $(Y_2(t), \frac{1}{2}(Y_1(t)-W_1(t)))$, respectively.
Calculation of the covariance matrices is straightforward from our previous findings complemented by the formula
$${\rm Cov}(Y_1(t),W_1(t))= -\sigma_1^2(1-t)\log(1-t)$$
obtained by the It{\'o} isometry. For instance
\begin{eqnarray*}
{\rm Var}\left\{Y_1(t)-W_1(t) \right\}=\frac{4\sqrt{2}}{3} \left(t - \frac{t^2}{2} + (1 - t) \log{(1-t)}\right).\\
\end{eqnarray*}

\begin{figure}[h!]
	\centering
	\begin{subfigure}[b]{0.45\linewidth}
		\includegraphics[width=\linewidth]
{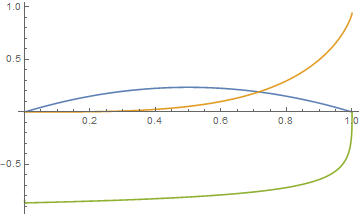}
\caption{
${\color{YellowOrange} {\rm Var} \, Y_1(t)}, {\color{MidnightBlue} {\rm Var}\{ Y_1(t)-W_1(t)\}},~~~~~~~$ {\color{OliveGreen} $~~~~~~{\rm Corr}\{Y_1(t),Y_1(t)-W_1(t)\}$.} }
	\end{subfigure}
	\begin{subfigure}[b]{0.45\linewidth}
		\includegraphics[width =\linewidth]{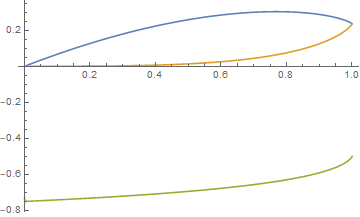}
\caption{
${\color{YellowOrange} {\rm Var} \, Y_2(t)}, {\color{MidnightBlue} {\rm Var}\{\frac{1}{2}( Y_1(t)-W_1(t)\}},~~~~~~~$, {\color{OliveGreen} $~~~~~~{\rm Corr}\{Y_2(t),\frac{1}{2}(Y_1(t)-W_1(t))\}$.} }
	\end{subfigure}
\end{figure}

\section{Convergence of the moments}\label{Moments}

The weak convergence shown in 
Theorem \ref{main res}, combined with the convergence of  moments for the majorant and minorant processes imply by virtue of `Pratt's lemma'  the expansion
$$\E (X(t)-t)^k=\nu^{-k/4}\,\E \, Y_1^k(t)+o(\nu^{-k/4}), ~~~k\in{\mathbb N},$$ 
along with a similar expansion for the $t$-centred moment functions of $ L/\sqrt{2\nu}$. For $k=1$  the leading term vanishes, hence  the 
convergence rate should be higher, as is evidenced  in the instance $t=1$ by  (\ref{q1asymp}).
The logarithmic factor in (\ref{q1asymp}) results from the optimality gap, hence it is of interest to inspect how the gap emerges in the course of selection.

We choose smallest possible constants 
 $\beta_->0, \beta_+\geq 0$ to squeeze the control function in the bounds
\begin{equation}\label{psi UB}
\sqrt{\frac{2(1-x)}{\nu(1-t)}} - \frac{\beta_-}{\nu (1-t)} \leq \psi(t,x) \leq \sqrt{\frac{2(1-x)}{\nu(1-t)}} + \frac{\beta_+}{\nu (1-t)}, ~~~ t,x\in[0,1).
\end{equation}
The condition  (\ref{small x}) thus holds  with $\beta\geq \max(\beta_-, \beta_+,1)$.
To motivate introducing two parameters we note  that  
for the optimal strategy (\ref{psi UB}) holds with $\beta_+=0$ 
(\cite{BD2}, Equation (3.5)), and that there is some asymmetry 
in the upper and lower estimates below.

The following auxiliary result is a special case of Gr{\"o}nwall inequality:

\begin{lemma}\label{LL} Suppose function $f$ with $f(0)=0$ satisfies the integral inequality
$$
f(t)\leq -\int_0^t f(s)\left(  \frac{1}{1-s}+\frac{a}{(1-s)^2}\right)  {\rm d}s+\int_0^t g(s){\rm d}s, ~~~t\in [0,1),
$$
$a\in{\mathbb R}$.
Then 
\begin{equation}\label{fsol}
f(t)\leq (1-t)e^{-\frac{a}{1-t}}    \int_0^t \frac{e^{\frac{a}{1-s}}\,g(s)}{1-s}{\rm d}s,  ~~~t\in [0,1).
\end{equation}
\end{lemma}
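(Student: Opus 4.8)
The statement is the integrated form of a linear first-order Gr\"onwall estimate, so the plan is to read off the integrating factor directly from the kernel $k(s):=\frac{1}{1-s}+\frac{a}{(1-s)^2}$ and to identify the right-hand side of (\ref{fsol}) as the solution of the associated linear ODE. First I would introduce
$$
\mu(t):=(1-t)^{-1}\exp\!\left\{\tfrac{a}{1-t}\right\},\qquad t\in[0,1),
$$
and check that it is the integrating factor: since $\frac{d}{dt}\log\mu(t)=\frac{1}{1-t}+\frac{a}{(1-t)^2}=k(t)$, one has $\mu'=k\mu$ with $\mu>0$ and $\mu(0)=e^{a}$. By variation of parameters the function $\phi(t):=\mu(t)^{-1}\int_0^t\mu(s)g(s)\,ds$ is the unique solution of $\phi'=-k\phi+g$, $\phi(0)=0$; the factors $e^{\pm a}$ cancel and $\phi$ is exactly the expression claimed in (\ref{fsol}). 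Thus the whole task reduces to proving $f\le\phi$.

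The clean route is to pass to a differential inequality. Writing $G(t):=\int_0^t g$ and the nonnegative defect $\delta:=G-f-\int_0^t kf\ge 0$ (with $\delta(0)=0$), the hypothesis becomes the exact identity $f+\int_0^t kf=G-\delta$. Differentiating gives $f'+kf=g-\delta'$, so $(\mu f)'=\mu(f'+kf)=\mu(g-\delta')$; integrating from $0$ (using $f(0)=0$) and integrating $\int_0^t\mu\delta'$ by parts with $\mu'=k\mu$ yields
$$
f(t)=\phi(t)-\delta(t)+\mu(t)^{-1}\int_0^t k(s)\mu(s)\,\delta(s)\,ds .
$$
Hence $f\le\phi$ is equivalent to $\mu(t)^{-1}\int_0^t k\mu\,\delta\le\delta(t)$. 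If one knows the sharper pointwise inequality $f'\le -kf+g$ (equivalently, that the defect $\delta$ is nondecreasing), this is immediate: then $(\mu f)'\le\mu g$ directly, and integrating with $\mu>0$ gives $\mu(t)f(t)\le\int_0^t\mu g$, i.e. $f\le\phi$, for every $a\in\mathbb{R}$.

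The one point needing genuine care is precisely the passage from the integral hypothesis to this differential form. The bare integral inequality does not by itself force $\delta$ to increase, and the negative feedback kernel $k$ can in principle push $f$ back above $\phi$; so I would either record the differential version of the bound as it actually arises from the generator estimates (which is the natural source of such a relation here), or invoke a sign condition on the kernel, noting that $k(s)=\frac{1-s+a}{(1-s)^2}$ is nonpositive on $[0,1)$ once $a\le -1$, the regime in which the plain integral Gr\"onwall bound applies without any extra hypothesis. Everything else --- verifying that $\mu$ is the integrating factor, the variation-of-parameters identification of $\phi$, and the cancellation of the $e^{\pm a}$ factors --- is routine bookkeeping.
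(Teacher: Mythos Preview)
Your route via the integrating factor $\mu(t)=(1-t)^{-1}e^{a/(1-t)}$ and the identification of the right-hand side of (\ref{fsol}) as the variation-of-parameters solution $\phi$ is precisely the paper's idea: their two-line proof simply records that $\phi$ solves the associated equation and that the map $g\mapsto\phi$ is positivity-preserving.

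Your hesitation about the passage from the integral hypothesis to the differential one is not excess scruple; it pinpoints a real gap in the lemma \emph{as stated}. Positivity of $g\mapsto\phi$ is not what is needed --- one needs positivity of the resolvent acting on the defect $\delta=G-f-\int_0^{\cdot} kf\ge0$, and for $k>0$ this fails. Concretely, with $a=0$ (so $k(s)=1/(1-s)>0$) and $g\equiv0$, take the tent defect $\delta(s)=\min(s,1-s)$ and set $f:=-R\delta$, where $R\delta$ solves $R\delta+\int_0^{\cdot}k\,R\delta=\delta$. Then $f(0)=0$ and $f(t)+\int_0^t kf=-\delta(t)\le0$, so the integral hypothesis holds; yet a direct computation gives $R\delta(t)=\delta(t)-(1-t)\int_0^t \delta(s)(1-s)^{-2}\,{\rm d}s<0$ for $t$ near $1$, hence $f(t)>0=\phi(t)$ there. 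So the bare integral inequality does not yield the conclusion for general $a\in\mathbb R$, and the paper's brief argument shares this lacuna.

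Your first proposed remedy is the correct one and matches how the lemma is actually used in Section~\ref{Moments}: the bounds on $p(t)-t$ and $q(t)-t$ arise by taking expectations in \emph{pointwise} estimates for $\tfrac{\nu}{2}\psi^2-1$ and $\sqrt{\nu/2}\,\psi-1$, which deliver the differential form $f'\le -kf+g$ before any integration. Under that stronger hypothesis your computation $(\mu f)'\le\mu g$ is valid for every $a\in\mathbb R$ and gives $f\le\phi$ at once. Your second remedy ($k\le0$, i.e.\ $a\le-1$) is correct in itself but does not cover the applications, where $a=\pm\beta_\pm/\sqrt{2\nu}$ is small.
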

\begin{proof}
The linear operator defined by 
the right-hand side of (\ref{fsol}) gives a solution to the associated integral equation. 
The assertion follows by observing that nonnegative $g$ is mapped to nonnegative $f$.
\end{proof}

\subsection{Bounds on $p(t)$}

The upper bound in  (\ref{psi UB}) implies
\begin{eqnarray}\label{psi-up}
\frac{\nu}{2}\psi^2(s,x) -1\leq -\frac{x-s}{1-s}+\frac{\beta_+\sqrt{2}}{\sqrt{\nu}(1-s)}\sqrt{\frac{1-x}{1-s}}+\frac{\beta_+^2}{2\nu(1-s)^2}.
\end{eqnarray}
Using the elementary inequality  $\sqrt{1-z}\leq1-z/2$ for $z\leq1$   we obtain
$$
\E \sqrt{\frac{1-X(t)}{1-t}}\leq 1- \frac{\E X(t)-t }{2(1-t)}\leq 1-\frac{p(t)-t}{2(1-t)},
$$
and integrating in (\ref{psi-up}) yields 
$$
p(t)-t\leq -\int_0^t (p(s)-s) \left( \frac{1}{1-s}+ \frac{b}{(1-s)^2}\right)  {\rm d}s +                             \int_0^t g(s){\rm d}s,
$$
where
\begin{eqnarray*}
g(t)=\frac{2b  }{(1-t)} + \frac{b^2}{(1-t)^2}, ~~~~b=\frac{\beta_+}{\sqrt{2\nu}}.
\end{eqnarray*}
Applying Lemma \ref{LL} with $f(t)=p(t)-t$ and $a=b$ we obtain
$p(t)-t\leq G(b,t),$
where
$$G(b,t):=1+b-t-(1+b)(1-t)\exp\left(-\frac{bt}{1-t}\right).$$

For small $b>0$, this is a concave function, with $G(b,t)-bt$ changing sign from $+$ to $-$ at some point  approaching $2/3$ as  $b\to 0$.
The asymptotic expansion 
$$G(b,t)\sim 2tb+ \frac{(2t-3t^2)b^2}{2(1-t)}, ~~~b\to0,$$
holds uniformly, at least for $t$ bounded away from $1$; 
therefore there is  an upper bound $G(b,t)<2bt+c_+b^2$, where
the  constant should be chosen to satisfy 
$$c_+>\max_{t\in[0,1]} \,\frac{2t-3t^2}{2(1-t)}= 2-\sqrt{3}.$$
It follows that
\begin{equation}\label{p-up}
p(t)-t\leq \frac{\beta_+\sqrt{2}\,\,t}{\sqrt{\nu}} + \frac{c_+\,\beta_+^2}{2\nu},
\end{equation}
 uniformly in $t\in[0,1]$ for  sufficiently big $\nu$.

To estimate in the opposite direction, we have  from the lemma $p(t)-t\geq G(b,t)$, this time with negative parameter
$$b=-  \frac{\beta_-}{\sqrt{2\nu}}.$$
Changing the variable to $T=(1-t)^{-1}$ simplifies analysis, and it is readily checked that 
$$T\mapsto T\, {\bigg(}G\left(b,1-T^{-1}\right)-2b\left(1-T^{-1}\right)-b^2T{\bigg)}$$
is a concave function, positive in the range $1\leq T<T_0$, where $T_0$ is such that 
$-b\, T_0$ approaches, as $|b|\to0$,   a limit  value $1.7933\ldots$  (the positive root of $1+x+x^2=e^x$), which we replace  by smaller $\sqrt{2}$. Thus 
\begin{equation}\label{p-down2}
p(t)-t\geq -\frac{\beta_-\sqrt{2}\,\,t}{\sqrt{\nu}} -   \frac{\beta_-^2}{2\nu(1-t)},~~{\rm for}~~t\leq1-\frac{\,\beta_-}{2\sqrt{\nu}},
\end{equation}
provided  $\nu$  is sufficiently large. But then by monotonicity from (\ref{p-down2}) it follows that
\begin{eqnarray}\label{extreme-p}
p(t)\geq p\left(1-\frac{\,\beta_-}{2\sqrt{\nu}}\right)\geq
1-\frac{\left(\sqrt{2}+\frac{3}{2}\right)\beta_-}{\sqrt{\nu}}, ~~~{\rm for ~~}t>1-\frac{\,\beta_-}{2\sqrt{\nu}},
\end{eqnarray}
hence in this range of $t$
\begin{eqnarray}\label{extreme-p}
p(t)-t&\geq& p(t)-1>-\frac{\left(\sqrt{2}+\frac{3}{2}\right)\beta_-}{\sqrt{\nu}},
\end{eqnarray}
(also note that the trivial upper bound $p(t)-t<1-t<\frac{\beta_-}{2\sqrt{\nu}}$ might improve upon (\ref{p-up}) in this range).

Bounding the second term in (\ref{p-down2}) by its maximum, 
and combining with (\ref{extreme-p}) into  single inequality we obtain an
 estimate with simpler constant $3>\sqrt{2}+3/2$
\begin{equation}\label{p-down3}
p(t)-t\geq  -\frac{3\beta_-}{\sqrt{\nu}}, ~~~t\in[0,1].
\end{equation}
Similarly, the second term in (\ref{p-up}) can be absorbed into the first with a larger constant.
With the full range $t\in[0,1]$  covered, we have shown that 
$$\sup_{t\in[0,1]}|p(t)-t|=O\left(\frac{1}{\sqrt{\nu}}\right).$$

\subsection{Bounds on $q(t)$}

We turn to $q(t)=\E L(t)/\sqrt{2\nu}$. For the upper estimate we use (\ref{pq}) to obtain an integral inequality
\begin{eqnarray*}
q(t)=\E\int_0^t \sqrt{\nu/2}\,\,\psi(s,X(s))ds\leq \E\int_0^t \left(\sqrt{\frac{1-X(s)}{1-s}} +\frac{\beta_+}{\sqrt{2\nu}(1-s)}\right){\rm d}s\leq\\
\int_0^t \left(1- \frac{p(s)-s}{2(1-s)} +\frac{\beta_+}{\sqrt{2\nu}(1-s)}\right){\rm d}s\leq 
\int_0^t \left(1- \frac{q(s)-s}{(1-s)} +\frac{\beta_+}{\sqrt{2\nu}(1-s)}\right){\rm d}s,
\end{eqnarray*}
then  apply Lemma \ref{LL} with $a=0$ to get 
\begin{equation}\label{q-up}
q(t)-t\leq  \frac{\beta_+\,\,t}{\sqrt{2\nu}}.
\end{equation}
The estimate approaches zero faster than in (\ref{q1asymp}), but there is no disagreement since $q(1)<1$.
Note that applying (\ref{pq}) and (\ref{p-up}) straight incurs a second term.

For the optimal strategy, (\ref{psi UB}) holds with $\beta_+=0$, thus in this case $p(t)-t\leq 0$ and $q(t)-t\leq 0$.

Obtaining the lower bound is more challenging.
Under the optimal strategy, 
the value function $F$ in (\ref{Mmart1}) is concave \cite{BD1}, but under our more general assumptions on $\psi$ this need not be the case. 
However,  by virtue of (\ref{exF}) we  may replace $F$  by the concave function 
\begin{equation}\label{Fhat}
\widehat{F}(\nu):=\sqrt{2\nu}-\frac{1}{12}\log (\nu+1),
\end{equation}
to obtain an expansion 
\begin{equation}\label{Lexp}
\E L(t)=\widehat{F}(\nu)- \E \{\widehat{F}(\nu(1-X(t))(1-t))\}+O(1),
\end{equation}
where the absolute value of the remainder  is bounded uniformly  in $t$ and $\nu$ by some constant $K$ only depending on $\beta_-$ and $\beta_+$.

By  monotonicity and concavity of $\widehat{F}$, using Jensen inequality and (\ref{p-down3}) we estimate
\begin{eqnarray*}
\E\{ \widehat{F}(\nu(1-X(t))(1-t))\}\leq
\widehat{F}\left(\nu(1-t) (1-p(t)\right)=~~~~~~~~~~~~\\
\widehat{F}\left(\nu(1-t)^2 \left(1-\frac{p(t)-t}{1-t}\right)\right)\leq~~~~~~~~~~~~\\
\widehat{F}\left(\nu(1-t)^2 \left(1 +     \frac{3\beta_-}{\sqrt{\nu}(1-t)}\right)\right)=~~~~~~~~~~~~\\
\sqrt{2\nu}(1-t) \sqrt{1 +     \frac{3\beta_-}{\sqrt{\nu}(1-t)}}
-\frac{1}{12}
\log\left\{\nu(1-t)^2 \left(1 +     \frac{3\beta_-}{\sqrt{\nu}(1-t)}\right) +1\right\}<\\
\sqrt{2\nu}(1-t) \left(1 +     \frac{3\beta_-}{2\sqrt{\nu}(1-t)}\right)
-\frac{1}{12}
\log\left(\nu(1-t)^2) \right)<\\
\sqrt{2\nu}(1-t)+\frac{3\beta_-}{\sqrt{2}} 
-\frac{1}{12}\log\nu -\frac{1}{6}\log(1-t).
\end{eqnarray*}
Substituting  this   along with   (\ref{Fhat})
into (\ref{Lexp}) we see that, for large enough $\nu$,
\begin{equation}\label{Ldown}
\E L(t)\geq \sqrt{2\nu}\,t+\frac{1}{6}\log(1-t)-   \left(\frac{3\beta_-}{\sqrt{2}} +K\right), ~~~~t\in[0,1].
\end{equation}

The logarithmic term makes  (\ref{Ldown}) useless 
for $t$ too close to $1$. However, cutting the range at, say $t_0:=1-1/\sqrt{\nu}$, we can just employ the monotonicity
to squeeze the expected length as
$$F(\nu)\geq \E L(t)\geq \E L(t_0)\geq \sqrt{2\nu}+\frac{1}{12}\log \nu -\left(\sqrt{2}+\frac{3\beta_-}{\sqrt{2}} +K\right),~~~t\geq t_0.$$

For a better overview, we re-write (\ref{q-up}) as 
\begin{equation}\label{q-up11}
\E L(t)\leq \sqrt{2 \nu}  \,t+\beta_+\,\,t.
\end{equation}
Comparing (\ref{Ldown}) with (\ref{q-up11}) it is seen that, uniformly in $t\in[0,1-h]$,  the mean selected length $\E L(t)$ is within $O(1)$ from $\sqrt{2\nu}\,t$, the latter being the exact mean length under the (unfeasible) stationary strategy.

With some more work we could show an upper bound with two leading terms as in (\ref{Ldown}) and a remainder uniformly bounded over $t<1-\nu^{-1/4+\epsilon}$.

\end{document}